\theoremstyle{plain}
\newtheorem{theorem}{Theorem}[section]
\newtheorem{lemma}[theorem]{Lemma}
\theoremstyle{definition}
\newtheorem{example}{Example}
\numberwithin{equation}{section}
\numberwithin{theorem}{section}
\numberwithin{table}{section}
\numberwithin{figure}{section}
\newcommand{\T}{\mathcal T}
\newcommand{\pr}{\mathcal P}
\newcommand{\K}{\mathcal K}
\newcommand{\ang}[1]{\langle #1 \rangle}
\title{Solving Non-homogeneous Nested Recursions Using Trees}
\author{Abraham Isgur \and Mustazee Rahman \and Stephen Tanny}
\address{Department of Mathematics\\
University of Toronto\\
40 St. George Street\\
Toronto\\
ON M5S 2E4\\
Canada}
\email[Abraham Isgur]{umarovi@gmail.com}
\email[Mustazee Rahman]{mustazee.rahman@utoronto.ca}
\email[Steve Tanny]{tanny@math.utoronto.ca}
\date{\today}
\keywords{Non-homogeneous nested recursion, meta-Fibonacci sequence, Conolly sequence.}
\subjclass[2000]{Primary 05A19, 11B37; Secondary 05A15, 05C05}
\begin{document}

\begin{abstract}
The solutions to certain nested recursions, such as Conolly's $C(n) = C(n-C(n-1))+C(n-1-C(n-2))$, with initial conditions $C(1)=1, C(2)=2$, have a well-established combinatorial interpretation in terms of counting leaves in an infinite binary tree. This tree-based interpretation, which has a natural generalization
to a $k$-term nested recursion of this type, only applies to homogeneous recursions, and only solves each recursion for one set of initial conditions
determined by the tree. In this paper, we extend the tree-based interpretation to solve a non-homogeneous version of the $k$-term recursion that includes a constant term. To do so we introduce a tree-grafting methodology that inserts copies of a finite tree into the infinite $k$-ary tree associated with the solution of the corresponding homogeneous $k$-term recursion. This technique can also be used to solve the given non-homogeneous recursion with various sets of initial conditions.
\end{abstract}

\maketitle

\section{Introduction} \label{sec1}
In this paper all values for the variables and parameters are integers unless otherwise specified.
For $k \geq 1$, $a_i$, and $b_i > 0,\; i = 1 \ldots k$, consider the nested (also called meta-Fibonacci)
homogeneous recursion \begin{equation} \label{nested} A(n) = \sum_{i=1}^k A(n - a_i - A(n-b_i))\end{equation}
which we abbreviate as $\ang{a_1;b_1: \cdots: a_k;b_k}$. We call the sequences that appear as solutions to nested recursions \emph{meta-Fibonacci sequences}.

Over the past twenty years, many special cases of (\ref{nested}), together with alternative sets of initial conditions, have been examined (see the references for specifics). Examples include Hofstadter's famous and mysterious $Q$-sequence \cite{GEB} given by $\ang{0;1:0;2}$ with $Q(1) = Q(2) = 1$, and Conolly's well-known sequence $C(n)$ \cite{Con} given by $\ang{0;1:1;2}$ with $C(1) =1, C(2) = 2$. Recently, fascinating and unexpected combinatorial connections have been discovered between the solutions to certain such nested recursions and infinite, labeled trees \cite{BLT, DR, JR, IRT}. For example, it is shown in \cite{JR} that the shifted Conolly sequence $C_s(n)$ determined by $\ang{s;1:s+1;2}$ for any fixed $s \geq 0$ and initial conditions $C_s(i) = 1$ for $1 \leq i \leq s+1$ and $C_s(s+2) = 2$ counts the number of leaves in a suitably constructed infinite labeled binary tree (with root at infinity) that have labels that are less than or equal to $n$ (the construction depends on the parameter $s$). In the labeling, each node of the infinite binary tree receives one label except for the so-called $s$-nodes along the top of the tree, each of which receives $s$ labels. In \cite{DR} an analogous combinatorial interpretation is derived for solutions to the $k$-term recursion (\ref{nested}) with $a_i = s+i-1$ and $b_i= i$, and with $k+s$ initial conditions that are determined by the leaf counts of the correspondingly constructed infinite, labeled $k$-ary tree. These initial conditions are said to ``follow the tree", in the sense that they are precisely the ones that force the solution to conform to the specified labeled tree.

Building on this work, Isgur et al \cite{IRT} vary the labeling scheme by inserting $j$ labels in each node of the $k$-ary tree rather than a single label, where $j \geq 1$ is a fixed parameter. In this way they derive a combinatorial interpretation for the solution to the generalized Conolly recurrence \begin{equation} \label{Conolly} R(n) = \sum_{i=1}^kR(n-s-(i-1)j-R(n-ij)) \end{equation} with the initial conditions: $R(i) = i$ for $1 \leq i \leq j$, $R(i) = j$ for $j+1 \leq i \leq j+s$, $R(i) = i-s$ for $j+s+1 \leq i \leq kj + s$, and $R(i) = kj$ for $kj+s+1 \leq i \leq (k+1)j+ 2s$. It is shown that $R(n)$ counts the number of \emph{labels} in the leaves of the labeled $k$-ary tree that are less than or equal to $n$.

Here we extend the tree-based correspondence described above to combinatorially interpret solutions to a non-homogeneous version of the Conolly nested recursion (\ref{Conolly}), namely, \begin{equation} \label{recursion} R(n) = \sum_{i=1}^kR(n-s-(i-1)j-R(n-ij))+ \nu \end{equation} where $\nu$ is any constant, and with specified initial conditions. Interest in such nested recursions is natural and longstanding; see, for example, \cite{Gol}, where the recursion $g(n) = g(n-g(n-1)) + 1$, with $g(1) = 1$, is shown to have a neat, closed form solution. Our focus on a constant for the non-homogeneous term in (\ref{recursion}) can be readily explained: if the non-homogeneous term is an integer valued function $\nu(n)$ with $|\nu(n)| \geq cn$ for all $n$, then the right side of (\ref{recursion}) grows at least linearly in $n$. Therefore $R(n)$ will grow at least linearly in $n$ and it seems plausible that eventually one of the arguments $n-s-(i-1)j-R(n-ij)$ in (\ref{recursion}) will be negative or will exceed $n$. At that point the recursion will cease to be well-defined. So a constant value for $\nu(n)$ is a natural choice. \footnote{Of course, one could consider non-constant, sub-linear $\nu(n)$. To date our empirical investigations suggest that only constant $\nu(n)$ lead to well-defined, infinite solution sequences. Considerable further work is needed in this area to confirm this contention, or to determine which non-constant $\nu(n)$, if any, lead to an infinite solution sequence to (\ref{recursion}) for some set of initial conditions.}

To solve (\ref{recursion}) combinatorially we ``graft" infinitely many copies of a finite, rooted tree $\T$ (or in some cases a portion of $\T$) onto the original $k$-ary tree that solves the related homogeneous recursion (\ref{Conolly}).
As we explain below, it turns out that for any given value of $\nu$ in (\ref{recursion}), we can find infinitely many finite trees $\T$ that correspond to that choice of $\nu$. Each of these finite trees determines a set of initial conditions for the recursion, and these sets of initial conditions may differ. Thus, our tree grafting technique permits us to find combinatorial interpretations for different sets of initial conditions for the same recursion. In particular, we can use our technique for the case $\nu =0$, thereby solving the homogeneous nested recursion (\ref{Conolly}) with initial conditions determined by the choice of $\T$. Prior to this, the only combinatorial solution to (\ref{Conolly}) is the one associated with the initial conditions imposed by the leaf counts of the usual $k$-ary tree \cite{DR, IRT}.

The outline of the rest of this paper is as follows. In Section \ref{sec2} we describe precisely the procedure for grafting copies of an arbitrary finite, rooted tree $\T$ on the infinite $k$-ary tree in \cite{DR, IRT}, and for labeling the resulting infinite tree $\K$. This construction depends upon $\T$, as well as the three parameters $k, j$ and $s$ in (\ref{recursion}). In Section \ref{sec3} we establish that the infinite tree $\K$ constructed in Section \ref{sec2} provides a combinatorial interpretation to (\ref{recursion}): $R(n)$ is the number of labels up to $n$ on leaves of $\K$. Finally, in Section \ref{sec4} we discuss alternative labeling schemes for $\K$ that give rise to a variety of interesting results; in particular, we derive a new combinatorial interpretation for Golomb's recursive sequence $g(n)$.

\section{Constructing the infinite tree $\K$: the grafting technique} \label{sec2}
Let $\T$ be any finite rooted tree with at least two nodes. The \textbf{height} $p$ of $\T$ is the length of the longest path from the root to any of its nodes. Fix $k \geq 1$ corresponding to the desired value of $k$ in ($\ref{recursion}$). We create a modified labeled $k$-ary tree $\K$ using $\T$.

The construction of $\K$ requires two steps. First we construct the nodes and edges, that is, the skeleton, of $\K$; this involves grafting copies of $\T$ on the infinite $k$-ary tree in \cite{DR, IRT}. Then we insert labels, which are successive positive integers, within the nodes of $\K$. To do this, first we specify the order in which the nodes of $\K$ are to be traversed one at a time; then we insert the appropriate number of labels, either $j$ or $s$ (the parameters in ($\ref{recursion})$) in each node. As we traverse $\K$, we keep count of the number of labels up to that point that are located in the leaves of $\K$. The ``leaf label" sequence generated by this enumeration satisfies a nested, non-homogeneous Conolly-type recursion of the form (\ref{recursion}), where the tree $\T$ determines $\nu$.

To help describe this construction, we illustrate our discussion using the rooted tree $\T$ of height 3 in Figure \ref{fig1}, together with $k=2$. We show how this results in an infinite, labeled binary tree with leaf label counting function $R(n)$ that satisfies the recursion $R(n) = R(n-R(n-2))+R(n-2-R(n-4))-2$.

\begin{figure}[htpb]
\begin{center}
\includegraphics[scale=0.35]{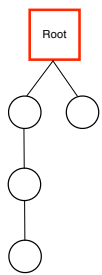}
\caption{Example of a rooted tree $\T$ of height $p=3$.} \label{fig1}
\end{center}
\end{figure}

\subsection*{Constructing the skeleton of $\K$}
The skeleton of $\K$ consists of an infinite sequence $\K_i$ of rooted, finite subtrees of $\K$ that we join together to form $\K$. For each $i$, the root of $\K_i$ is called the $i^{th}$ \textbf{supernode} of $\K$, while all other nodes are \textbf{regular nodes}. For $i=p>1$, where $p$ is the height of $\T$, the subtree $\K_p$ is isomorphic to $\T$. The subtree $\K_{p-1}$ is obtained by making a copy of $\K_p$ and then deleting all the leaves of the copy. See Figure \ref{fig3}, where we illustrate the subtrees $\K_3$ and $\K_2$ using the tree $\T$ of height $p=3$ in Figure \ref{fig1}; we draw the supernodes as squares and the regular nodes as circles. For $2<i\leq p$, we repeat this process successively, making $\K_{i-1}$ by copying $\K_i$ and deleting the leaves of the copy. The subtree $\K_1$ is a special case: after deleting the leaves of a copy of $\K_2$, we attach an extra regular node as a child of the first supernode (this extra child can be considered as the zeroth supernode - see Section \ref{sec4}).

For $i > p$ we construct the subtrees $\K_i$  by first making a copy of $\K_{i-1}$ and then attaching precisely $k$ nodes to each of its leaves. Thus, for $i > p$ the subtree $\K_i$ consists of the tree $\T$ with each of its leaves the root of a $k$-ary subtree of height $i-p$, so $\K_i$ has height $i$. Finally, for all $i>0$ we connect all the subtrees $\K_i$ to $\K_{i+1}$ by adding an edge from the $i^{th}$ supernode to the $(i+1)^{st}$. See Figure \ref{fig3}. Notice that the tree $\K$ is closely related to the infinite $k$-ary tree in \cite{DR, IRT}: take the former $k$-ary tree, insert the finite tree $\T$ (or a portion of it), with the root of $\T$ coinciding with each of the supernodes of the $k$-ary tree, and leave the rest of the original $k$-ary tree untouched.

\begin{figure}[htpb]
\begin{center}
\includegraphics[scale=0.35]{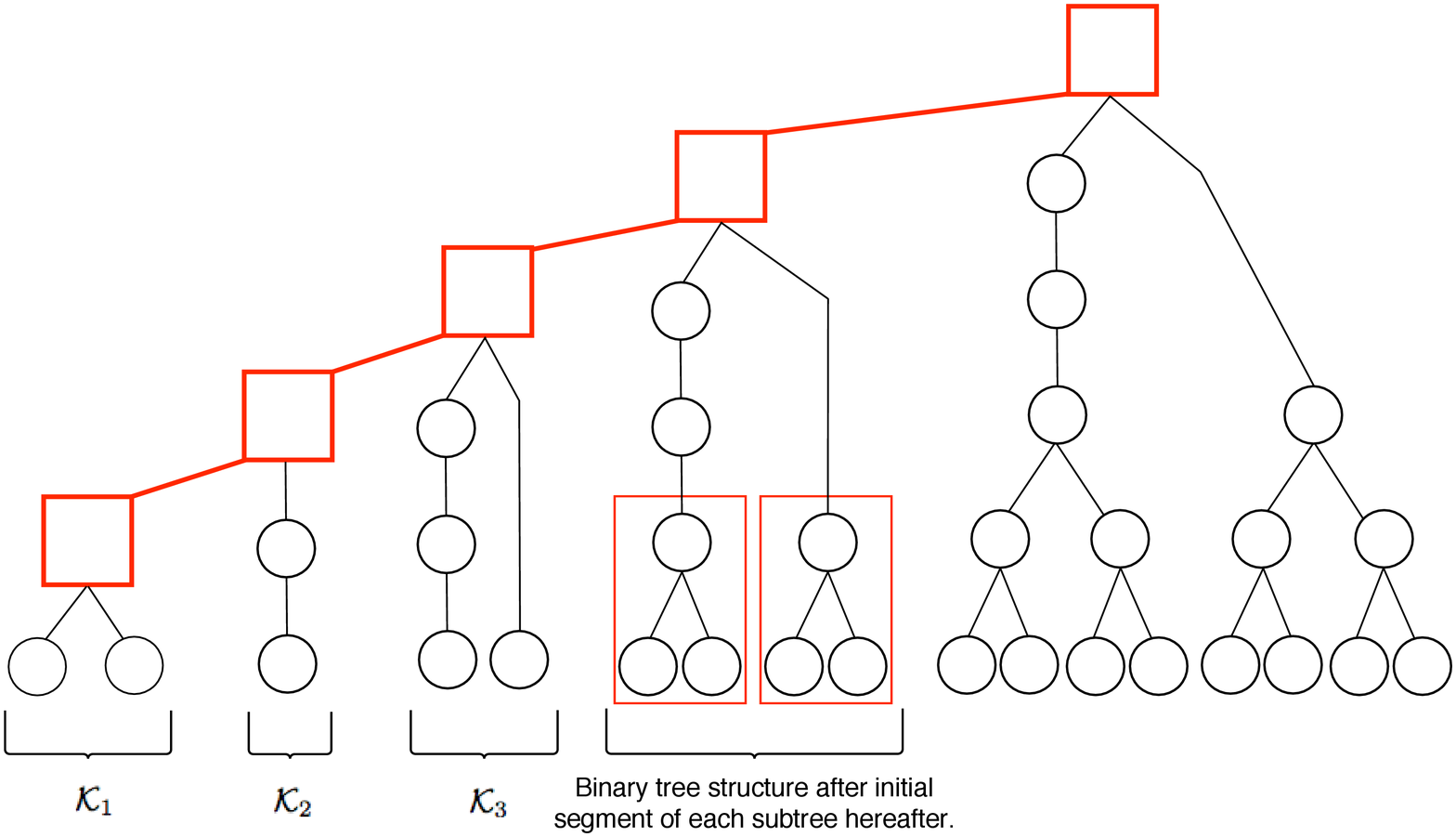}
\caption{First 5 subtrees of the skeleton of $\K$ derived from the tree in Figure \ref{fig1}, using $k=2$.} \label{fig3}
\end{center}
\end{figure}

\subsection*{Labeling $\K$}
Fix the values $j \geq 1$ and $s \geq 0$, corresponding to the parameters of the same name in ($\ref{recursion}$). Insert $j$ labels into each regular node of $\K$ and $s$ labels into each supernode; for convenience, we refer to the regular nodes and supernodes as $j$-nodes and $s$-nodes, respectively. The labels consist of successive integers starting at 1. Before we can insert these labels we must specify the traversal order of the nodes in $\K$. We recursively define a \textbf{pre-order} traversal as follows: $\K_1$ is traversed by beginning at the first child of the first supernode followed by the supernode itself and then its remaining children (note this is not the pre-order traversal of $\K_1$). Having traversed $\K_i$ for $i \geq 1$, the subtree $\K_{i+1}$ is traversed next in the usual pre-order way by beginning at its root, which is the $(i+1)^{th}$ supernode. See Figure \ref{fig4}, where we label the nodes of $\K$ in the order in which they are traversed, and then Figure \ref{fig5}, where we insert $j=2$ labels in each regular node and $s=0$ labels in each supernode.

\begin{figure}[htpb]
\begin{center}
\includegraphics[scale=0.35]{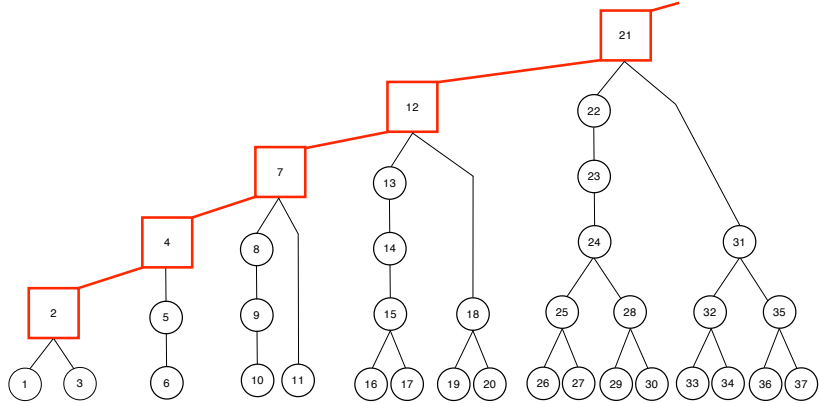}
\caption{The order in which the nodes of $\K$ from Figure \ref{fig3} are traversed.} \label{fig4}
\end{center}
\end{figure}

\begin{figure}[htpb]
\begin{center}
\includegraphics[scale=0.35]{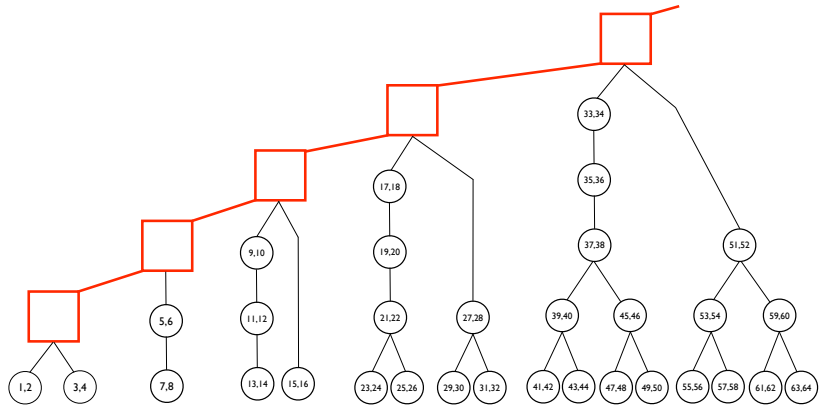}
\caption{First 5 subtrees of the completed infinite tree $\K$ from our example. The labeling parameters are $j=2$ and $s=0$.} \label{fig5}
\end{center}
\end{figure}

We explain in the next section how the combinatorial interpretation for the solution of (\ref{recursion}) is derived from the ``leaf label" sequence generated from $\K$. Before doing so, we require additional terminology and notation with which we conclude this section.

Call $\K(n)$ the subtree of $\K$ containing all the labels between $1$ and $n$ and all the nodes in pre-order up to the node containing $n$. For
$m \geq 1$, define $m$ to be a \textbf{leaf label} of $\K$ if $m$ is contained in a leaf of $\K$. Throughout the paper the \textbf{leaf labels of $\K(n)$} are defined to be all the labels in $\K(n)$ that are leaf labels in $\K$. It is very important to note that a node may be a leaf in $\K(n)$ and not be a leaf
in $\K$. For example, in Figure \ref{fig5}, the node containing the label 21 is a leaf of $\K(21)$ but not of $\K$. Thus, 21 is \emph{not} a leaf label.

Let $R(n)$ be the number of leaf labels in $\K(n)$. In Figure \ref{fig5}, $R(7) =5$ and $R(20) = 10$.

A \textbf{penultimate node} of $\K$ is a non-leaf node in $\K$ such that all of its children are leaves (for example, nodes 2 and 18 in Figure \ref{fig4} but not node 7). Call the labels in a penultimate node \textbf{penultimate labels}. The penultimate nodes (respectively, labels) of $\K(n)$ are the penultimate
nodes (respectively, labels) of $\K$ that are included in $\K(n)$.

Note that for $i \geq 2$ the leaves of $\K_i$ are the penultimate nodes of $\K_{i+1}$; the leaves of $\K_1$, other than the first leaf, are the penultimate nodes of $\K_2$, and $\K_1$ always has exactly one penultimate node (the first $s$-node of $\K$). Let $\ell_i$ be the number of leaves in $\K_i$. Define $\alpha$ (respectively, $\beta$) as the number of leaf labels (respectively, penultimate labels) occurring in $\K_1$ through $\K_p$. Then by the preceding observation $\alpha = j\sum_{i=1}^p \ell_i$, and $\beta = j(\sum_{i=1}^{p-1} \ell_i -1) + s$. Finally, let $N(i)$ be the largest label of $\K$ that occurs in $\K_i$.

We are now prepared to state and prove our key finding.

\section{Solving the non-homogeneous Conolly nested recursion} \label{sec3}

\begin{theorem} \label{thm1}
Let $\T$ be a finite rooted tree of height $p$. Let $\K$ be the infinite tree constructed using $\T$ and fixed parameters $k \geq 1, j \geq 1$, and $s \geq 0$. Define $\nu = \alpha - k(\beta - s + j)$. Let $R(n)$ be the leaf label counting function of $\K$. Then for $n > N(p+1)$, $R(n)$ satisfies the non-homogeneous nested recursion (\ref{recursion}), that is, \[R(n) = \displaystyle \sum_{i=1}^k R(n-s-(i-1)j-R(n-ij)) + \nu\,.\]
Equivalently, if any function $L(n)$ is defined by (\ref{recursion}) and the first $N(p+1)$ values of $L(n)$ agree with the corresponding values for the leaf label counting function $R(n)$, then $L(n)=R(n)$ for all $n$.
\end{theorem}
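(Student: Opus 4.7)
The plan is to proceed by strong induction on $n$, where the base cases are exactly the values $R(1),\ldots,R(N(p+1))$ of the leaf label counting function read off from $\K$, and where for $n>N(p+1)$ we verify the identity $R(n)=\sum_{i=1}^{k}R(n-s-(i-1)j-R(n-ij))+\nu$ directly from the structure of $\K$. Before attacking the main identity I would establish a set of structural bookkeeping lemmas: (i) a closed description of the labels populating $\K_m$ for $m>p$ in terms of $N(m-1)$, the number $\ell_m$ of leaves of $\K_m$, and the parameters $j,s$; (ii) a formula for $R$ at the distinguished positions $N(m)$, namely the cumulative count of leaf labels in $\K_1,\ldots,\K_m$; and (iii) the fact that for $m\geq p+1$ the pre-order traversal of $\K_m$ visits the copy of $\T$ first and then descends, child by child, into $k$ identical $k$-ary subtrees of height $m-p-1$ rooted at the leaves of $\T$. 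These facts let one translate between a label $n$ and the node of $\K$ containing it.

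The heart of the proof is the argument-chase familiar from the homogeneous cases \cite{DR, IRT}, adapted to the grafted skeleton. Fix $n>N(p+1)$ and let $v$ be the node of $\K$ containing the label $n$. Since $n>N(p+1)$, the node $v$ lies inside one of the $k$-ary subtrees glued to a leaf of the copy of $\T$ sitting inside some $\K_m$. For each $i\in\{1,\ldots,k\}$ I would show that $n-ij$ lands in a node which, once pre-order is accounted for, has a prescribed ancestor-or-sibling relationship with $v$; consequently $n-R(n-ij)$ equals the label of a specific regular node one level up, and the further shift by $s+(i-1)j$ steps across supernode labels and earlier siblings so that $n-s-(i-1)j-R(n-ij)$ lands on the last label of a node whose subtree is precisely the $i$-th of the $k$ child-subtrees that together make up the portion of $\K(n)$ below the $p$-th ancestor of $v$. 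Evaluating $R$ at that argument then counts the leaf labels contained in that child-subtree.

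Summing these $k$ contributions recovers every leaf label of $\K(n)$ that sits in one of these $k$ child-subtrees. What is missing is exactly the leaf labels of $\K(n)$ living in the initial grafted region $\K_1\cup\cdots\cup \K_p$ minus whatever overcounting the naive sum produces at the penultimate level of the homogeneous $k$-ary tree. A careful accounting shows the deficit is the constant $\alpha-k(\beta-s+j)$: the $\alpha$ term adds back the leaf labels inside the grafted copies of $\T$; the $k(\beta-s+j)$ term subtracts what the homogeneous recurrence would have counted at the penultimate level in the absence of grafting, where the ``$+j$'' captures the extra regular node attached to the first supernode in $\K_1$ and the ``$-s$'' accounts for the one $s$-node that is penultimate in $\K_1$. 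This is precisely $\nu$, completing the inductive step. The equivalent uniqueness statement is then immediate: two functions satisfying (\ref{recursion}) that agree on $1,\ldots,N(p+1)$ must agree everywhere, since (\ref{recursion}) determines $L(n)$ from strictly earlier values.

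The main obstacle, in my view, is not the algebraic identification of $\nu$ but the positional case analysis in the middle paragraph: one must verify that the arguments $n-s-(i-1)j-R(n-ij)$ land exactly where claimed when $v$ lies in various regions of $\K_m$ (deep inside a $k$-ary branch, near the transition from the grafted $\T$ to the $k$-ary portion, or at the very end of $\K_m$ where the next supernode takes over), and separately handle the anomalous structure of $\K_1$ with its extra child. I expect these boundary regions are the reason the hypothesis $n>N(p+1)$ is needed: only once the pre-order has fully passed through both the grafted region of $\K_{p+1}$ and the first full layer of $k$-ary growth do all $k$ arguments of the recursion simultaneously refer to positions whose structural role in $\K$ is stable and uniform.
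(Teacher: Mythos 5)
There is a genuine gap, and it sits exactly where you locate the ``heart'' of your argument. You claim that for each $i$ the argument $n-s-(i-1)j-R(n-ij)$ lands on the last label of a node whose subtree is ``the $i$-th of the $k$ child-subtrees'' of some bounded local region of $\K(n)$, and that evaluating $R$ there counts the leaf labels in that child-subtree, so that the $k$ terms sum to (essentially) all leaf labels of $\K(n)$. This cannot be right on quantitative grounds alone: consecutive arguments satisfy
\[
\bigl(n-s-(i-1)j-R(n-ij)\bigr)-\bigl(n-s-ij-R(n-(i+1)j)\bigr)=j-\bigl(R(n-ij)-R(n-(i+1)j)\bigr)\in[0,j],
\]
so all $k$ arguments lie within $kj$ of one another and the $k$ summands are nearly equal, each of size $\Theta(n)$. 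They are $k$ nearly identical \emph{global} counts, not the cardinalities of $k$ disjoint local pieces; a subtree hanging $p$ levels below an ancestor of the node containing $n$ has boundedly many leaf labels and cannot account for a summand of size $\Theta(n)$. Moreover $R(m)$, for $m$ the last label of a node $w$, counts \emph{all} leaf labels up to $m$ in pre-order, not the leaf labels of the subtree rooted at $w$. Because your decomposition is wrong, your accounting of $\nu$ (adding back leaf labels ``inside the grafted copies of $\T$'') also has no foundation: $\alpha$ counts leaf labels only in the first $p$ subtrees $\K_1,\dots,\K_p$, not in every grafted copy of $\T$.

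The missing idea is the pruning operation and the self-similarity of $\K$ under it. One shows that deleting all leaf labels of $\K(n)$, converting the first $s$-node to a $j$-node, and relabelling yields a tree isomorphic to $\K(n-s+j-R(n))$; hence $R(n-s-(i-1)j-R(n-ij))=\pr R(n-ij)$, the leaf-label count of the pruned tree $\pr\K(n-ij)$, which (up to boundary corrections in the last penultimate node) equals the number of \emph{penultimate} labels of $\K(n-ij)$. The recursion then follows from the global $k{:}1$ correspondence between leaf labels and penultimate labels beyond $\K_p$: each penultimate node there has exactly $k$ children, so $k$ times the penultimate-label count equals the leaf-label count up to the constant $\nu=\alpha-k(\beta-s+j)$ contributed by the grafted region $\K_1,\dots,\K_p$, where this correspondence fails. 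The remaining work --- which your last paragraph correctly anticipates --- is the case analysis on where $n$ sits (leaf, penultimate node, or neither) to control the $O(kj)$ discrepancies among the $k$ terms; this is handled in the paper by introducing the completion defect $\Delta(n)$. Your framing as strong induction is harmless but unnecessary for the identity itself (only for the trivial uniqueness statement), and your base-case and boundary observations are fine; but without the pruning lemma the sum cannot be evaluated, so the proposal as written does not prove the theorem.
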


In Figure \ref{fig5} $p=3, k=j=2, s=0$ and $\nu = 10 - 2( 4 - 0 + 2) = -2$. Then for $n>N(4)=32$, the leaf label counting function $R(n)$ satisfies $R(n) = R(n-R(n-2)) + R(n-2-R(n-4)) - 2$.

Before turning to the proof of Theorem \ref{thm1}, we provide several observations. From the formulas for $\alpha$ and $\beta$ in Section \ref{sec2} we have a computationally simpler expression for $\nu$:
\begin{equation*} \nu = j\ell_p - j(k-1)(\ell_1 + \cdots + \ell_{p-1})\;\text{if}\; p \geq 2,\;\text{and}\; \nu = j(\ell_1-k)\;\text{if}\; p =1\,.\end{equation*}

In some cases fewer than $N(p+1)$ initial conditions will suffice. For our purposes, we are only interested in knowing that for some sufficiently large number of initial conditions the recursion (\ref{recursion}) will generate the leaf label counting sequence as its solution.
Finally, note that different choices of $\T$ enable us to solve recursions of the form (\ref{recursion}) with diverse initial conditions. In particular, now we are able to solve (\ref{Conolly}) with many different sets of initial conditions.

We begin the proof of Theorem \ref{thm1} by defining the \emph{pruning operation} on the subtree $\K(n)$ for $n > N(1)$. This operation yields a new tree $\pr\K(n)$ defined as follows: first, delete all leaf labels of $\K(n)$ along with the nodes containing them. Then convert the first $s$-node into a $j$-node. Finally, relabel the new tree in pre-order, keeping in mind that the first $s$-node is now a $j$-node so it receives $j$ labels rather than $s$. See Figure \ref{fig6} for the pruning of $\K(27)$ from the tree in Figure \ref{fig5}. Note that the node of $\K(27)$ that contains the label 27 is a leaf of $\K(27)$ but not of $\K$, and as such it is not deleted.

The significance of the pruning operation on the subtree $\K(n)$ is that it results in $\K(m)$ for some $m < n$. In this regard, we can view $\K$ as self-similar with respect to the pruning operation. Let $\pr R(n)$ denote the number of leaf labels in $\pr\K(n)$. We build to the proof of Theorem \ref{thm1} via a series of lemmas concerning $\pr\K(n)$.

\begin{figure}[htpb]
\begin{center}
\includegraphics[scale=0.35]{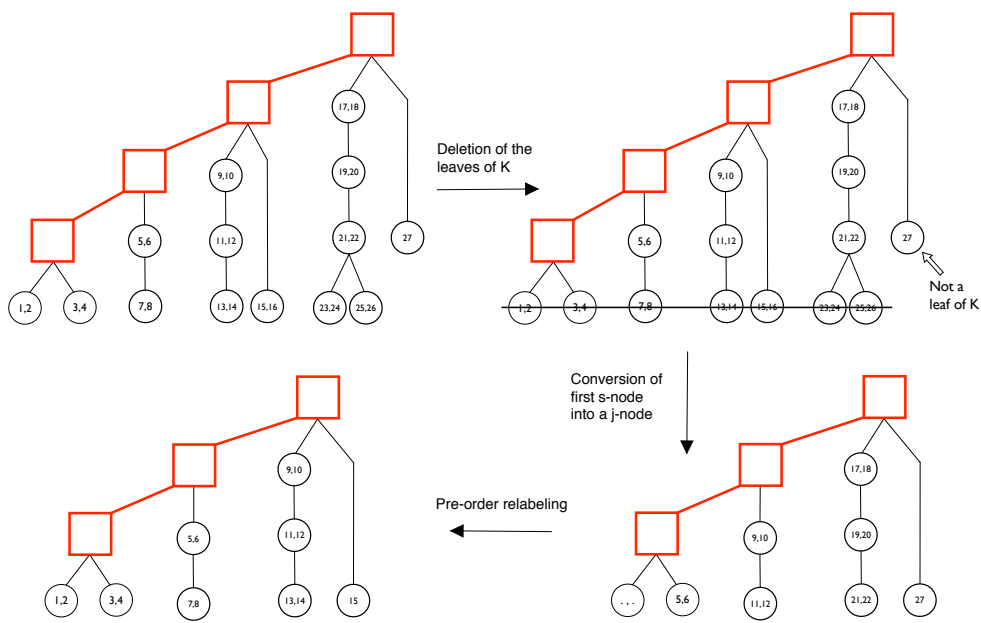}
\caption{The pruning operation on $\K(27)$ results in $\K(15)$.} \label{fig6}
\end{center}
\end{figure}

\begin{lemma}[Pruning] \label{lem1}
For $n > N(1)$ the tree $\pr K(n)$ has $n-s+j-R(n)$ labels and is isomorphic to the subtree $\K(n-s+j-R(n))$. Consequently, $\pr R(n) = R(n-s+j-R(n))$.\end{lemma}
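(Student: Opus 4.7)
The plan is to verify the two assertions of the lemma separately—the label count and the isomorphism $\pr\K(n) \cong \K(m)$ with $m := n-s+j-R(n)$—and then to note that the identity $\pr R(n) = R(m)$ follows immediately, since the number of leaf labels is preserved by any labeled isomorphism. The count is routine: $\K(n)$ has $n$ labels in total; pruning deletes the $R(n)$ leaf labels together with their containing nodes; converting the first $s$-node into a $j$-node replaces $s$ labels by $j$, contributing a net $j-s$; and the subsequent relabeling preserves the count. Hence $\pr\K(n)$ has $n - R(n) + (j-s) = m$ labels.

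The main task is to exhibit the labeled isomorphism. The key idea is that pruning shifts the skeleton of $\K$ down by one subtree index. By the recursive construction in Section \ref{sec2}, for every $i \geq 3$ the subtree $\K_{i-1}$ is obtained from $\K_i$ by deleting its leaves, while $\K_1$ is obtained from $\K_2$ by deleting its leaves and attaching an extra regular child to supernode 1. Since pruning removes precisely the non-supernode leaves of each $\K_i$ inside $\K(n)$, the pruned image of old $\K_i$ becomes new $\K_{i-1}$ structurally for $i \geq 3$, and the pruned image of old $\K_2$ becomes new $\K_1$ with its extra child omitted. Under the correspondence old supernode $i \leftrightarrow$ new supernode $i-1$ for $i \geq 2$, the old first supernode—now converted to a $j$-node and still joined to old supernode 2 by the original supernode-to-supernode edge of $\K$—supplies precisely that missing extra child of new supernode 1. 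Moreover, $\K_1$ always has height 1 (for $p \geq 2$), as one verifies by tracking heights through the recursive construction, so the pruned image of old $\K_1$ reduces to the single node supernode 1 and no further structural mismatch arises. With the skeletal match in hand, the pre-order relabeling of the pruned tree assigns labels in the same order in which $\K(m)$ places its labels—converted old supernode 1 first ($=$ new extra child), then old supernode 2 ($=$ new supernode 1), then its remaining children, and so on—with $j$ labels at regular nodes and $s$ at supernodes, matching $\K(m)$ exactly.

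The main technical point is the case when $n$ lies strictly inside some $\K_M$, so that $\K(n)$ contains only a pre-order prefix of $\K_M$. Here one must verify that pruning such a prefix produces the matching pre-order prefix of new $\K_{M-1}$ inside $\K(m)$. This will follow from the observation that the pre-order of $\K_{M-1}$ is the pre-order of $\K_M$ restricted to its non-leaf nodes, because within each parent's subtree the leaves are visited after their non-leaf ancestors. Thus ``restrict to a pre-order prefix'' and ``delete the leaves of $\K_M$'' commute in the needed sense, so the two prefixes coincide. Combined with the identifications for $i \neq M$, this completes the labeled isomorphism $\pr\K(n) \cong \K(m)$, from which $\pr R(n) = R(m) = R(n-s+j-R(n))$ is immediate.
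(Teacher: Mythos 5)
Your proof is correct and follows the same route as the paper: the identical label count $n-R(n)+j-s$, and the isomorphism via the fact that deleting the leaves of $\K_q$ yields $\K_{q-1}$, with the conclusion $\pr R(n)=R(n-s+j-R(n))$ then immediate. The paper simply asserts that the isomorphism ``follows directly from the definition'' (noting only that leaf-deletion commutes with taking pre-order prefixes), whereas you supply the additional, correct details---in particular the identification of the converted first supernode with the extra child of the new $\K_1$, which is exactly the point of the $s$-node-to-$j$-node conversion.
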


\begin{proof} Since $\K(n)$ contains $R(n)$ leaf labels, deleting the leaf labels of $\K(n)$ results in a loss of $R(n)$ labels. Also, replacing the first $s$-node with a $j$-node results in a net change of $j-s$ labels following the pruning operation. Thus, the total number of labels in $\pr \K(n)$ is $n-R(n)-s+j$.

That $\pr K(n)$ is isomorphic to the subtree $\K(n-s+j-R(n))$ follows directly from the definition of the pruning operation and the construction of the tree $\K$, since deleting all the leaves of $\K_q$ results in $\K_{q-1}$. More generally, if we delete the leaves of $\K$ from the subtree of $\K_q$ that consists of the first $m$ nodes of $\K_q$ (in pre-order), and then relabel in pre-order, the result is the subtree consisting of the first $m' $ nodes of $\K_{q-1}$ for some $m' < m$.

Finally, since $\pr K(n)$ is isomorphic to $\K(n-s+j-R(n))$ and $\K(n-s+j-R(n))$ contains $R(n-s+j-R(n))$ leaf labels by definition, $\pr R(n) = R(n-s+j-R(n))$.\end{proof}

The key to the proof of Theorem \ref{thm1} is that since most penultimate nodes have $k$ children, $k$ times the number of penultimate labels in $\K(n)$ is essentially the number of leaf labels in $\K(n)$, with the difference being given by the non-homogeneous term $\nu$. Call $\K(n)$ \emph{complete} if each of its penultimate nodes has all of its children from $\K$, and each of these children has $j$ labels. If $\K(n)$ is complete then the number leaf labels in $\pr \K(n)$ is $j$ times the the number of penultimate level nodes in $\K(n)$.

\begin{lemma}[Completeness] \label{lem2}
For $n \geq N(p)$, if $\K(n)$ is complete then \[\pr R(n) = \dfrac{R(n) - \nu}{k}\,.\]\end{lemma}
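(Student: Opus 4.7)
The plan is to compute $\pr R(n)$ by carefully identifying the leaves of $\pr\K(n)$, and then to translate the resulting count into $R(n)$ via the regular $k$-ary structure of $\K$ outside the grafted copies of $\T$.

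The first (and subtlest) step is the structural claim that under completeness the leaves of $\pr\K(n)$ are exactly the penultimate nodes of $\K(n)$. The forward direction is immediate: every penultimate node $v$ of $\K(n)$ has, by definition, only leaves of $\K$ as its children in $\K$; completeness guarantees that all of these leaves are in $\K(n)$, so pruning removes every one of them and $v$ is left childless in $\pr\K(n)$. The converse direction must rule out spurious leaves of $\pr\K(n)$ arising from non-penultimate interior nodes of $\K(n)$. If $v$ is such a node, then $v$ has some non-leaf-of-$\K$ child $w$ in $\K$; since $w$ is never pruned, it suffices to show $w\in\K(n)$. For supernodes this is automatic from the supernode-chain edge joining $\K_i$ to $\K_{i-1}$, which always provides an already-visited child. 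For regular interior nodes one argues from the pre-order convention together with completeness and the standing hypothesis $n\ge N(p)$ that $w$ must already lie in $\K(n)$. I expect this verification to require the most care.

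With the structural claim in hand, each leaf of $\pr\K(n)$ carries exactly $j$ labels: regular penultimate nodes are $j$-nodes by construction, and the unique $s$-node that is penultimate in $\K$ (namely the first supernode) has been converted to a $j$-node during pruning. Writing $P(n)$ for the total number of penultimate nodes of $\K(n)$, this yields $\pr R(n)=jP(n)$. I would then split both $R(n)$ and $P(n)$ according to whether a contribution lies in $\K_1\cup\cdots\cup\K_p$ or in some $\K_i$ with $i>p$. Since $n\ge N(p)$, the top part is fully contained in $\K(n)$ and contributes exactly $\alpha$ leaf labels and $\sum_{i=1}^{p-1}\ell_i$ penultimate nodes. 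Every penultimate node of $\K_i$ with $i>p$ was constructed by attaching precisely $k$ children to a leaf of $\K_{i-1}$, so it has exactly $k$ leaf children of $\K$, each a $j$-node fully present in $\K(n)$ by completeness. Denoting by $Q$ the number of these ``bottom'' penultimate nodes in $\K(n)$, we obtain $R(n)=\alpha+jkQ$ and $P(n)=\sum_{i=1}^{p-1}\ell_i+Q$.

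The remainder is routine algebra: solve $Q=(R(n)-\alpha)/(jk)$, substitute into $\pr R(n)=jP(n)$, and simplify using $\beta-s+j=j\sum_{i=1}^{p-1}\ell_i$ (immediate from $\beta=j(\sum_{i=1}^{p-1}\ell_i-1)+s$) together with $\nu=\alpha-k(\beta-s+j)$. The expression collapses to $\pr R(n)=(R(n)-\nu)/k$, as required. The real work sits in the converse half of the structural claim in Step 1; the subsequent label count and the algebra are essentially bookkeeping against the formulas for $\alpha$, $\beta$, and $\ell_i$ assembled in Section \ref{sec2}.
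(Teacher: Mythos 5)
Your Steps 2--4 are correct and in fact reproduce the paper's own computation: the paper writes the count as $k(\pr R(n)-(\beta-s+j))=R(n)-\alpha$, and since $\beta-s+j=j\sum_{i=1}^{p-1}\ell_i$ this is exactly your $\pr R(n)=jP(n)$, $R(n)=\alpha+jkQ$, $P(n)=\sum_{i=1}^{p-1}\ell_i+Q$. The genuine gap is in Step 1. The structural claim that the childless nodes of the finite tree $\pr\K(n)$ are exactly the penultimate nodes of $\K(n)$ is false, and the converse argument you sketch cannot be completed. Completeness constrains only the penultimate nodes of $\K(n)$; it says nothing about the node $v$ that actually contains the label $n$ when $v$ is a non-penultimate interior node of $\K$. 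In pre-order every child of $v$ is visited after $v$ (supernodes being the one exception you correctly handle), so \emph{no} witness $w$ lies in $\K(n)$, and $v$ survives pruning as a childless node that is not penultimate. This situation is fully compatible with completeness: by Lemma \ref{lem3}(1), $\K(n)$ is complete whenever $n$ is neither a leaf label nor a penultimate label, and such an $n$ can perfectly well be the first label of, say, the root of a deep $k$-ary subtree, none of whose children have yet been traversed. So the claim you flag as ``requiring the most care'' is not merely delicate --- it is untrue.

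The reason your final formula survives is that $\pr R(n)$ does not count labels in childless nodes of the finite tree $\pr\K(n)$. By the Pruning Lemma \ref{lem1}, $\pr\K(n)\cong\K(n-s+j-R(n))$, and a leaf label of $\pr\K(n)$ means a label sitting in a node that is a leaf of the \emph{infinite} pruned tree; by the self-similarity of $\K$ under pruning these nodes are precisely the former penultimate nodes of $\K$. The spurious childless node above contributes nothing, exactly as the label $21$ in Figure \ref{fig5} (or $27$ in Figure \ref{fig6}) sits in a node that is a leaf of $\K(21)$ but not of $\K$ and hence is not a leaf label. Replacing your Step 1 by this observation --- which is how the paper argues, invoking ``the self-similarity of $\K$ with respect to pruning'' rather than analyzing which nodes of the finite tree are childless --- immediately gives $\pr R(n)=jP(n)$, since under completeness every penultimate node of $\K(n)$ carries all of its labels (otherwise its children would be missing) and the first $s$-node is converted to a $j$-node. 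Your Steps 2--4 then finish the proof as written.
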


\begin{proof} Recall that $\alpha$ (respectively, $\beta$) is the number of leaf labels (respectively, penultimate labels) occurring in $\K_1$ through $\K_p$. Since $n \geq N(p)$, the pruned tree $\pr K(n)$ contains the subtrees $\K_1$ to $\K_{p-1}$, so has $\beta -s+j$ leaf labels in these subtrees. So $\pr R(n) - (\beta - s+j)$ is the number of leaf labels in $\pr K(n)$ occurring after $\K_{p-1}$. But by the self-similarity of $\K$ with respect to pruning, this is also the number of penultimate labels in $\K(n)$ after label $N(p)$, so after $\K_p$.

By the completeness of $\K(n)$, each penultimate node of $\K(n)$ occurring after $\K_p$ has $k$ children, these children are the only leaves of $\K$ included in $\K(n)$ that occur after $\K_p$, and all these children, as well as their penultimate node parents, have $j$ labels each. Group these children with their penultimate level parents (who also come after $\K_p$). We then get a $k:1$ correspondence between these children and their parents, which also extends to a correspondence between the labels situated in them.

Now, $R(n) - \alpha$ counts the number of leaf labels in $\K(n)$ after label $N(p)$. So it equals the number of labels in the children mentioned in the preceding paragraph. On the other hand $\pr R(n) - (\beta - s+j)$, the number of penultimate labels in $\K(n)$ after label $N(p)$, is the number of labels in the parents mentioned above. Thus, we use the correspondence above to count the number of leaf labels in $\K(n)$ after label $N(p)$ in two ways:
\[ k(\pr R(n) - (\beta -s+j)) = R(n) - \alpha\,.\] Simplifying and substituting $\nu = \alpha - k(\beta-s+j)$ we get the desired result. Note that this also explains the definition of $\nu$.
\end{proof}

Now observe that if $\K(n)$ and $\K(m)$ have the same penultimate labels, then $\pr R(n) = \pr R(m)$. We use this to compute the values of $\pr R(n)$ based on the location of $n$ in $\K$. To this end, let $\Delta(n)$ denote the minimal non-negative integer such that $\K(n + \Delta(n))$ is complete.

\begin{lemma} \label{lem3}
The following holds for $n > N(p+1)$.\begin{enumerate}
\item If $n$ is neither a leaf label nor a penultimate label then $\K(n)$ is complete. Consequently for every $n > N(p+1)$, we have that $0 \leq \Delta(n) < (k+1)j$.\\
\item Suppose $\Delta(n) > 0$ so that $\K(n)$ is not complete. Then $0 < \Delta(n) < kj$ if and only if $n$ is a leaf label, and $\Delta(n) \geq kj$ if and only if $n$ is a penultimate label.\\
\item If $0 \leq \Delta(n) \leq kj$ then $\pr R(n) = \dfrac{R(n) + \Delta(n) - \nu}{k}$.\\
\item If $\Delta(n) > kj$ then $\pr R(n) = \dfrac{R(n) + kj - \nu}{k} - \Delta(n) + kj$.
\end{enumerate}
\end{lemma}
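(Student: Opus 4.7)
The plan is to analyze $\Delta(n)$ by identifying the local role of the node containing $n$ in $\K$ and then invoking Lemma \ref{lem2} on the completed tree $\K(n+\Delta(n))$.

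I would first prove (1) and (2) through a position-within-group argument. Since $n > N(p+1)$, the node $v$ containing $n$ lies in some $\K_i$ with $i \geq p+2$. For such $i$, every penultimate node in $\K_i$ sits in the pure $k$-ary extension below the grafted copy of $\T$, so it has exactly $k$ children in $\K$, each a $j$-node that is a leaf of $\K$; moreover, no supernode occurring in this range is itself a leaf or penultimate. If $v$ is neither a leaf nor a penultimate node of $\K$, then no descendant of $v$ lies in $\K(n)$, no ancestor of $v$ can be penultimate (a penultimate node has only leaf children, whereas $v$ is not a leaf), and every subtree appearing in pre-order before $v$ has been traversed in full, so all its penultimate nodes already have their full complement of $k$ children in $\K(n)$. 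Thus $\K(n)$ is complete and $\Delta(n)=0$. If $n$ is the $t$-th label of a leaf that is the $r$-th of $k$ sibling leaves, then completing $\K(n)$ requires appending $(j-t)+(k-r)j$ further leaf labels, so $\Delta(n)\in[0,kj-1]$. If $n$ is the $t$-th label of a penultimate node, then completing $\K(n)$ requires appending $j-t$ penultimate labels (to finish that node) and $kj$ leaf labels (for its $k$ children), so $\Delta(n)=(j-t)+kj\in[kj,(k+1)j-1]$. These three ranges yield (2) and the bound $\Delta(n)<(k+1)j$ of (1).

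I would then derive (3) and (4) by combining the completeness formula $\pr R(n+\Delta(n))=(R(n+\Delta(n))-\nu)/k$ from Lemma \ref{lem2} with the bookkeeping principle that passing from $\K(n)$ to $\K(n+\Delta(n))$ increases $R$ by the number of newly added leaf labels and increases $\pr R$ by the number of newly added penultimate labels (since the pruning operation turns penultimate nodes of $\K$ into leaves of $\pr\K$, hence penultimate labels into leaf labels of $\pr\K$). In the regime $0\leq\Delta(n)\leq kj$, parts (1)--(2) guarantee that every label appended between $n+1$ and $n+\Delta(n)$ is a leaf label, so $R(n+\Delta(n))=R(n)+\Delta(n)$ and $\pr R(n)=\pr R(n+\Delta(n))$, which rearrange to formula (3). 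In the regime $\Delta(n)>kj$, $n$ must be a penultimate label with $t<j$, and exactly $\Delta(n)-kj=j-t$ of the appended labels are penultimate (the remaining $kj$ being leaf labels), so $R(n+\Delta(n))=R(n)+kj$ and $\pr R(n)=\pr R(n+\Delta(n))-(\Delta(n)-kj)$, which rearrange to formula (4).

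The main obstacle is the structural verification underlying the first paragraph: checking that past $N(p+1)$ the supernodes and the grafted $\T$-nodes that lie along the path from $v$ down to the pure $k$-ary tail cannot secretly act as leaves or penultimate nodes of $\K$, so that the trichotomy ``leaf vs.\ penultimate vs.\ neither'' really is exhaustive and the type of each newly appended label is as claimed. Once this structural check is pinned down, the four parts follow from the counting identities above.
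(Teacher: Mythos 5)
Your proposal is correct and follows essentially the same route as the paper: classify $n$ as a leaf label, a penultimate label, or neither to compute $\Delta(n)$ explicitly, then apply the Completeness Lemma to $\K(n+\Delta(n))$ while tracking how $R$ and $\pr R$ change over the appended labels (using that equal sets of penultimate labels force equal values of $\pr R$). The only cosmetic differences are that in part (4) the paper passes through the last label $n'$ of the penultimate node and invokes part (3) whereas you apply Lemma \ref{lem2} directly, and that the structural verification you flag as the main obstacle is exactly the step the paper dispatches with ``follows from the definition of completeness and the construction of $\K$.''
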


\begin{proof} (1) and (2): The first statement in (1) follows from the definition of completeness and the construction of $\K$. To prove the second part of (1) and assertion (2), note that if $\Delta(n) > 0$ then $n$ is a label on either a penultimate node or on one of the $k$ children of a penultimate node. In either case we can complete $\K(n)$ by adding any missing labels on the penultimate node, and nodes and labels for any missing children until the last label in the last child of said penultimate node. In either case we add up to (but excluding) $(k+1)j$ labels. This proves the second statement in (1). Further, the label $n$ is on a penultimate node if and only if $kj \leq \Delta(n) < (k+1)j$ (since in this case we must add the nodes and labels for all $k$ children); otherwise, $n$ is a label in some child on the bottom level, and $0 \leq \Delta(n) \leq kj$. This establishes (2).

(3): If $\Delta(n) = 0$ then this assertion is simply Completeness Lemma \ref{lem2}. If $0 < \Delta(n) \leq kj$ then from (2) we get that there exists a penultimate node $X$ such that either $n$ is a label in one of its $k$ children (when $0 < \Delta(n) < kj)$ or $n$ is the final label (in pre-order) of $X$ (when $\Delta(n) = kj$). In both cases all of the trees $\K(n), \ldots, K(n + \Delta(n))$ have the same penultimate labels, namely, all the penultimate labels from $1$ through to the final label in $X$. It follows, as we observed just prior to the statement of this lemma, that $\pr R(n) = \pr R(n+\Delta(n))$. Further, $R(n + \Delta(n)) = R(n) + \Delta(n)$, since the $\Delta(n)$ labels following $n$ are all leaf labels in $\K$. Since $\K(n + \Delta(n))$ is complete, we apply the Completeness Lemma \ref{lem2} to it to deduce that \begin{equation*}
\pr R(n) = \pr R(n + \Delta(n)) = \frac{R(n + \Delta(n)) - \nu}{k} = \frac{R(n) + \Delta(n) - \nu}{k}\,.\end{equation*}

(4): If $\Delta(n) > kj$ then using the same notation as in the previous paragraph we see from (2) that $n$ is a label of the penultimate node $X$ but it is not the last label of $X$. Let $n'$ be the last label of $X$ so $n'-n = \Delta(n) - kj$. Also $\pr R(n') = \pr R(n) + (n'-n) = \pr R(n) + \Delta(n) - kj$, and clearly $R(n) = R(n')$ and $\Delta(n') = kj$. It follows by (3), applied to $n'$, that \begin{eqnarray*}
\pr R(n) &=& \pr R(n') - \Delta(n) + kj \\
&=& \frac{R(n') + \Delta(n') - \nu}{k} - \Delta(n) + kj\\
&=& \frac{R(n') + kj - \nu}{k} - \Delta(n) + kj \;.\end{eqnarray*}
This proves (4) and completes the proof of the lemma.\end{proof}

To prove Theorem \ref{thm1} we demonstrate the following key relation: for $n > N(p+1)$,
\begin{equation} \label{keyrelation} R(n) - \nu = \displaystyle \sum_{i=1}^k \pr R(n-ij)\,.\end{equation}
From (\ref{keyrelation}) and the Pruning Lemma \ref{lem1} our desired result is immediate, since for $n > N(p+1)$,
\begin{eqnarray*} R(n) &=& \sum_{i=1}^k \pr R(n-ij) + \nu \\
&=& \sum_{i=1}^k R(n- s - (i-1)j - R(n-ij)) + \nu \,.\end{eqnarray*}

To prove relation (\ref{keyrelation}) we have two cases.

\paragraph{\textbf{Case 1:}} Suppose $n$ is a leaf label. Then there exists $q$ and $r$ such that $1 \leq q \leq k$ and $1 \leq r \leq j$ and $n$ is the $r^{th}$ smallest label on the $q^{th}$ child (in pre-order) of its parent node $X$ (a penultimate node). The trees $\K(n), \K(n-j), \ldots, \K(n-(q-1)j)$ all have the same penultimate labels consisting of all such labels up to and including the penultimate labels in $X$. The tree $\K(n-qj)$ ends on the $r^{th}$ label in $X$, so its penultimate labels differ from those of the previously mentioned trees only at the last $j-r$ labels of $X$. The trees $\K(n-(q+1)j), \ldots, \K(n- kj)$ do not end on penultimate nodes, so they all have the same penultimate labels, namely, all penultimate labels occurring before the labels in $X$.

We now apply the remark we made just prior to Lemma \ref{lem3} that if two trees have the same penultimate labels, their pruned trees have the same number of leaf labels. So $\pr R(n) = \pr R(n-ij)$ for $1 \leq i \leq q-1$. In the same way, $\pr R(n-qj) = \pr R(n) - (j-r)$ and $\pr R(n-ij) = \pr R(n) - j$ for $q+1 \leq i \leq k$. But $\Delta(n) = (j-r) + j(k-q)$, so by (3) of Lemma \ref{lem3} we get $\pr R(n) = \frac{R(n) + (j-r) + j(k-q) - \nu}{k}$. Thus we conclude that
\[ \sum_{i=1}^k \pr R(n-ij) = k \pr R(n) -(j-r)-j(k-q) = R(n) - \nu\,.\]

\paragraph{\textbf{Case 2:}} Suppose $n$ is not a leaf label. In this case the subtrees $\K(n-j), \dots, \K(n-kj)$ all have the same penultimate labels, so $\pr R(n-j) = \pr R(n-ij)$ for $1 \leq i \leq k$. The subtrees $\K(n)$ and $\K(n-j)$ may differ on at most one penultimate node, which happens precisely when $n$ lies on a penultimate node $X$. So we can write $\pr R(n-j) = \pr R(n) - r'$ where $0 \leq r' \leq j$. Here $r'=0$ if and only if $K(n)$ is complete, and otherwise $n$ is the $r'$-th smallest label on the penultimate node $X$.

If $r' = 0$ then $\pr R(n) = \frac{R(n) - \nu}{k}$ by the Completeness Lemma \ref{lem2}. If $r' > 0$ then $\Delta(n) = kj + (j-r)$, and (4) of Lemma \ref{lem3} implies that $\pr R(n) = \frac{R(n) - \nu}{k} + r'$ after simplification. Therefore in all cases we conclude that
\begin{eqnarray*} \sum_{i=1}^k \pr R(n-ij) = k \pr R(n-j) &=& k(\pr R(n) - r')\\
&=& k \left ( \dfrac{R(n) - \nu}{k} + r' - r' \right ) \\
&=& R(n) - \nu\,.\end{eqnarray*}
This completes the proof of Case 2, and the theorem.

\section{Further applications} \label{sec4}

In the construction of the tree $\K$, we created each subtree $\K_i$ by starting with a complete $k$-ary tree of height $i$ and inserting an arbitrary tree $\T$. Here we describe how slight modifications to this construction, such as to the labeling scheme or to the number of labels in various nodes, can still yield a tree $\K$ whose leaf label counting function satisfies a recursion with the form (\ref{recursion}). The key requirement to these modifications is that they preserve the self-similarity of $\K$ with respect to a suitably adapted pruning operation (or in other words, provided that removing the leaves of $\K$ results in a tree isomorphic to $\K$ up to some consistent finite correction).

In what follows, instead of stating a complicated theorem describing the most general possible modification that we can devise, we illustrate the flexibility of our methodology and its ability to produce interesting results via several examples.

\begin{example}[Solving (\ref{recursion}) with arbitrary values of $s$] \label{ex1} We begin by describing how to adjust the labeling of $\K$ to yield a combinatorial interpretation for solutions to (\ref{recursion}) with arbitrary values for the parameter $s$. Our approach turns out to be somewhat simpler than that of \cite{DR}, where this is accomplished for (\ref{Conolly}), the homogeneous version of (\ref{recursion}), by \emph{removing} labels from specific nodes in the tree when $s<0$.

We change the number of labels inserted within each supernode of $\K$: let the $m^{th}$ supernode receive $s_m \geq 0$ labels. Next, let the extra child of the first supernode receive $s_0 \geq 0$ labels (instead of $j$). We now derive the resulting recursion related to $\K$. To do so, we must identify the nature of the pruning operation associated with $\K$.

For any label $n$, suppose that $n$ lies in the subtree $\K_m$ of $\K$, where $m= m(n)$. Prune the subtree $\K(n)$ as follows: delete all the leaf labels and the nodes containing them. Replace the $s_i$ labels in the $i^{th}$ supernode by $s_{i-1}$ labels for each $1 \leq i \leq m$. Then relabel the new tree $\pr K(n)$ in the usual way by pre-order. The tree $\pr \K(n)$ contains $n - R(n) + (s_0-s_1) + \cdots + (s_m-s_{m-1}) = n - R(n) +s_0 - s_m$ labels, and it is isomorphic to the subtree $K(n-R(n)+s_0 - s_m)$. Analogous to the Pruning Lemma \ref{lem1}, we have that $\pr R(n) = R(n-R(n) + s_0 - s_m)$.

Similarly, we have the analogue of the Completeness Lemma \ref{lem2} with the new value of $\nu = \alpha - k(\beta +s_0 - s_1)$ (where $\alpha$ and $\beta$ retain the same meaning as before). In the same way, Lemma \ref{lem3} and the key relation (\ref{keyrelation}) continue to hold as before. Thus, we conclude that the leaf label counting function $R(n)$ satisfies
\begin{equation} \label{non-homo_modif1} R(n) = \sum_{i=1}^k R(n-(s_{m(n-ij)}-s_0) - ij - R(n-ij)) + \nu \quad \text{for}\; n > N(p+1)\,.\end{equation} Notice that as $i$ ranges from $1$ to $k$, $m(n-ij)$ can only take the values $m(n)-1$ or $m(n)$, since jumping back by $ij$ labels for $1 \leq i \leq k$ takes us at worst to the previous subtree $\K_{m(n)-1}$.

When $s_0 = t+j$ and $s_m = s$ for all $m \geq 1$ then $s_m(n-ij) = s$ for all $n > N(p+1)$, and we deduce after some simplification that
\begin{equation} \label{non-homo_modif2} R(n) = \sum_{i=1}^k R(n-(s-t) - (i-1)j - R(n-ij)) + \nu \end{equation} for $n > N(p+1)$. The parameter $s-t$ can take any integer value, whereas the equivalent parameter $s$ in (\ref{recursion}) had to be nonnegative.
\end{example}

For the next application of our methodology, we apply the idea in Example \ref{ex1}, together with a modified labeling scheme, to solve (\ref{Conolly}) with specified initial conditions. We illustrate our approach with $k=2$, so with the recursion \begin{equation} \label{2-ary} R(n) = R(n-s-R(n-j)) + R(n-s-j-R(n-2j))\,.\end{equation} As we discussed in Section \ref{sec1}, this recursion, together with initial conditions that follow the corresponding labeled binary tree, is solved in \cite{IRT}.

\begin{example} [Solving (\ref{2-ary}) with more general initial conditions] \label{ex2}
We demonstrate how to solve (\ref{2-ary}) with initial conditions that begin with a string of $s_1 +1$ 1s for any given $s_1 \geq 0$. These are followed by an additional $s + 5j -1$ initial values determined by the tree $\K$ that we now construct.

\begin{figure}[htpb]
\begin{center}
\includegraphics[scale=0.35]{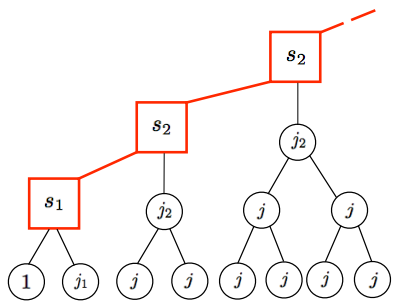}
\caption{The binary tree with label counts satisfying (\ref{2-ary}) and initial conditions beginning with $s_1+ 1$ 1s.
The entries in each node indicate the number of labels. We require $j_1 = 2j-1, j_2=j$, and $s_2 = s$.} \label{fig7}
\end{center}
\end{figure}

Here $\K$ is the infinite binary tree in \cite{JR} and $\T$ is $\K_2$ (see Figure \ref{fig7}). We traverse $\K$ in the usual way. We label $\K$ as follows: insert $s_1$ labels in the first supernode, and $s_2$ labels in all the other supernodes. Insert one label in the left child of the first supernode and $j_1$ labels in the right child of this supernode. The unique child of every other supernode contains $j_2$ labels. All other nodes in the tree get $j$ labels (see Figure \ref{fig7}).

Now we determine values for the parameters $j_1, j_2$ and $s_2$ so that the leaf label counting function for $\K$ satisfies (\ref{2-ary}). By pruning the subtree $\K(n)$ of $\K$ we mean deleting all the leaf labels of $\K(n)$ along with the nodes containing them, replacing the first supernode with a regular node containing 1 label, replacing the $s_2$ labels in second supernode with $s_1$ labels, and replacing the $j_2$ labels inside the child of the second supernode with $j_1$ labels. The resulting pruned tree is isomorphic to $\K(n-R(n) + 1-s_2 + j_1 - j_2)$, and so it contains $\pr R(n) = R(n-(s_2-1+j_2-j_1)-R(n))$ leaf labels.

Once again we have the key relation $R(n) = \pr R(n-j) + \pr R(n-2j) + \nu$ where $\nu = \alpha - 2(\beta -s_1+1-j_2+j_1) = 2j-j_1-1$. The term $\nu$ is
the difference between the number of leaf labels in $\K_1$ and $\K_2$ and twice the number of leaf labels in them after they have been pruned. Since the non-homogeneous term in (\ref{2-ary}) is 0 we must have $j_1 = 2j-1$ for $\nu$ to be 0. Furthermore, in order for $\pr R(n-j) = R(n-s-R(n-j))$ and
$\pr R(n-2j) = R(n-s-j-R(n-2j))$ we require that $s = j + s_2 - 1 + j_2 - j_1$, which simplifies to $s_2 + j_2 = s + j$. Thus, we may take $s_2 = s$ and $j_2 = j$. Then for all $n > 5j+s+s_1$, the leaf label counting function $R(n)$ satisfies (\ref{2-ary}), and $R(n)$ begins with $s_1 + 1$ 1s.\end{example}

It is worth emphasizing that the tree-based solutions we derive here for (\ref{2-ary}) are not usually the ones produced when this recursion is given \emph{exactly} $s_1+1$ 1s as the initial conditions (indeed, it is not necessarily true that any solution exists when the initial conditions are precisely $s_1+1$ 1s).\footnote{It is shown in Theorem 6.4 of \cite{IRT} that the recursion (\ref{2-ary}), together with exactly $s_1+1$ 1s as the initial conditions, where $s_1 +1 \geq s+2j$, has a well-defined solution, although no tree-based combinatorial interpretation for it could be identified.} The intuition for this is as follows: any binary tree-based solution $R(n)$ for (\ref{2-ary}) has the property that periodically it will have increments of 1 for a stretch of $2j$ indices, corresponding to that portion of the tree where we successively count the $2j$ consecutive leaf labels in the pair of leaves of the tree. But such a regularity to the increments in the solution is not usually present when the initial conditions for (\ref{2-ary}) are exactly $s_1+1$ 1s.

We now prove a necessary condition for a solution $A(n)$ of (\ref{nested}) to be the leaf label counting function for some tree $\K$ as constructed in Section \ref{sec2}. Any such $A(n)$ has the property that $A(n+1) - A(n) \in \{0,1\}$. Therefore the sequence $A$ is completely determined by its \textbf{frequency sequence} $F$ defined by $F(m) = | A^{-1}(\{m\})|$. We show that $F$ reflects the self-similarity of $\K$, in the sense that we can partition $F$ into blocks such that each block can be obtained from the previous block by a suitable transformation.

To see this, assume for simplicity that $\K$ contains one label in each regular node. Consider all values $A(n)$ as $n$ ranges over the labels in the subtree $\K_i$. Define the frequency sequence $F_i$ for that segment of $A$ by $F_i(m) = | A^{-1}(\{m\}) \cap \{n:\, n \in \K_i\}|$. We only consider the non-zero values of $F_i$ so $F_i$ is a finite sequence. For $i \geq p$, recall that $\K_{i+1}$ is obtained from $\K_i$ by adding $k$ children to each leaf of $\K_i$. It follows from this that for $i \geq p$, $F_{i+1}$ is obtained from $F_i$ as follows: first increase every value of $F_i$ by 1 except its last value (which is a 1 corresponding to the last leaf label of $\K_i$); then insert $k-1$ 1s between each successive pair of these values.

For $2 \leq i < p$, the derivation of $F_{i+1}$ from $F_i$ follows the same general procedure. However, in this range the number of children of each penultimate node in $K_{i+1}$ is not necessarily $k$, so the number of 1s inserted between pairs of values is not necessarily $k-1$. Instead it is determined by the finite tree $\T$ that is used to construct $\K$. Finally, $F_2$ and $F_1$ are determined directly from their definitions.

The partition of $F$ we seek is not given by the $F_i$ but by the sequences $F_i^*$ that are determined by removing the last value in each $F_i$ and for $i \geq 2$, increasing the first value of $F_i$ by 1. In this way they correct the frequency of $A(N(i))$. Here's why: the last value in $F_i$, which is a 1, results from the sole occurrence of $A(N(i))$ in the sequence $\{A(n);\,n \in \K_i\}$. However, from the construction of $\K$, $F(A(N(i)) = 1 + F_{i+1}(A(N(i)))= F_{i+1}^*(A(N(i)))$. The sequence $F$ is the infinite word resulting from the concatenation of all the $F_i^*$, that is, $F = \prod_{i=1}^{\infty} F_i^*$.

We illustrate the above discussion using Conolly's original recursion \[C(n) = C(n-C(n-1)) + C(n-1-C(n-2)); \;\; C(1) = 1,\; C(2) = 2\,.\] $C(n)$ counts the
number of leaf labels in the binary tree of Figure \ref{fig7} with one label per regular node and no labels in the supernodes. The frequency
sequence is $F(m) = \nu_2(2m)$ where $\nu_2(m)$ is the 2-adic valuation of $m$. We can decompose $F$ as $F_1^* = 1$, $F_2^* = 2,1$, $F_3^* = 3,1,2,1\, \ldots$ It is precisely the decomposability of the frequency sequence as above that allows one to interpret solutions to recursions of the form (\ref{nested}) as counting leaves in some infinite tree. While it is straightforward to decompose the frequency sequence of $C(n)$ (the beginning of $F_i^*$ is the first occurrence of $i$), we do not have a criterion to determine decomposability of general meta-Fibonacci sequences arising as solutions to (\ref{nested}). The problem of determining whether any tree $\T$, and hence $\K$, corresponds to a given frequency sequence appears challenging.

Our final observation is that when the initial conditions are specified by a tree $\K$ we may change the first few initial conditions arbitrarily without affecting the resulting solution sequence. Notice that if $n > N(p+1)$, pruning the tree $\K(n-ij)$ for $1 \leq i \leq j$ results in a tree containing the first $p$ subtrees $\K_1$ to $\K_p$. Suppose that for $1 \leq n \leq N(p)-1$ we set $R(n)$ arbitrarily, and for $N(p) \leq n \leq N(p+1)$ we leave $R(n)$ as the number of leaf labels in $\K(n)$. Then the recurrence relations (\ref{recursion}) or (\ref{non-homo_modif1}) will be satisfied by $R(n)$ with the new initial conditions, because the pruned trees $\pr \K(n-ij)$ for $n > N(p+1)$ and $1 \leq i \leq k$ will contain the first $N(p)$ labels. As such, all the arguments of the recursion will have value at least $N(p)$ and the proof proceeds as before. Thus, the first $N(p)-1$ values of the sequence $R(n)$ can be set arbitrarily and the recurrence relations (\ref{recursion}) or (\ref{non-homo_modif1}) still holds.

We conclude by deriving the solution for the Golomb recursion $g(n)$ \cite{Gol} discussed in Section \ref{sec1} using our tree-grafting methodology.

\begin{example}[Golomb's triangular sequence] \label{ex3} Golomb's sequence is defined by \[g(n) = g(n-g(n-1)) + 1 \;\text{and}\; g(1) = 1.\] Let $\T$ be be a rooted path of length 2. Take $s_0 =1$, $s_m=0$ for all $m \geq 1$, and $j=1$. Then we construct the unary tree whose leaf counts generate Golomb's sequence (see Figure \ref{fig8}). This shows that Golomb's sequence is a step function that increases by one at the indices $n = \binom{k+1}{2} + 1$ for every $k \geq 1$.\footnote{The step function property implies that $g(n)$ has a closed form, namely, $g(n) = \lfloor \frac{\lfloor \sqrt{8n} \rfloor + 1}{2} \rfloor$. See \cite{Gol}.}
\end{example}

\begin{figure}[htpb]
\begin{center}
\includegraphics[scale=0.35]{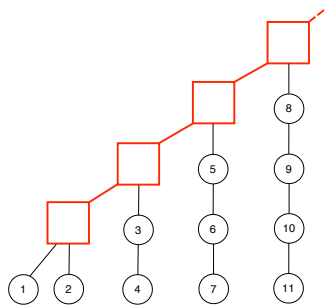}
\caption{The unary tree $\K$ that generates Golomb's recursive sequence.} \label{fig8}
\end{center}
\end{figure}


\begin{thebibliography}{99}

\bibitem {BKT} B. Balamohan, A. Kuznetsov and S. Tanny, On the behaviour of a variant of HofstadterÕs Q-sequence, \emph{J. Integer Seq.} 10 (2007), Article 07.7.1.

\bibitem {BLT} B. Balamohan, Z. Li, and S. Tanny, A combinatorial interpretation for certain relatives of the Conolly sequence, \emph{J. Integer Seq.} 11 (2008), Article 08.2.1.

\bibitem {CCT} J. Callaghan, J. J. Chew III, and S. Tanny, On the Behavior of a Family of Meta-Fibonacci Sequences, \emph{SIAM J. Discrete Math.} 18(4) (2005), 794--824.

\bibitem {Con} B.W. Conolly, Fibonacci and meta-Fibonacci sequences. in: S. Vajda. ed., \emph{Fibonacci \& Lucas Numbers
and the Golden Section: Theory and Applications}, E. Horwood Ltd., Chichester, 1989, 127--139.

\bibitem {Gol} S. Golomb, Discrete chaos: sequences satisfying ``strange" recursions, preprint (undated, probably late eighties or early nineties).

\bibitem {GEB} D. R. Hofstadter, \emph{G\"odel, Escher, Bach: An Eternal Golden Braid}, Random House, 1979.

\bibitem {IRT} A. Isgur, D. Reiss, and S. Tanny, Trees and meta-Fibonacci sequences, \emph{Elecron. J. Combin.} 16(1) (2009), R129.

\bibitem {JR} B. Jackson and F. Ruskey, Meta-Fibonacci sequences, binary trees and extremal compact codes, \emph{Electron. J. Combin.} 13 (2006), R26.

\bibitem {DR} F. Ruskey and C. Deugau, The combinatorics of certain $k$-ary meta-Fibonacci sequences, \emph{J. Integer Seq.} 12 (2009), Article 09.4.3.

\bibitem {Tanny92} S.M. Tanny, A well-behaved cousin of the Hofstadter sequence, \emph{Discrete Math.} 105 (1992), 227--239.

\end{thebibliography}
\end{document}